\newtheorem{thm}{Theorem}[section]
\theoremstyle{remark}
\theoremstyle{remark}
\theoremstyle{definition}
\newtheorem{df}[thm]{Definition}
\newtheorem{ex}[thm]{Example}
\numberwithin{equation}{section}
\newcommand{\C}{\mathbb{C}}
\def\8{\infty}
\def\blab#1{\begin{equation}\label{#1}}
\def\elab{\end{equation}}
\def\blab#1{\begin{equation}\label{#1}}
\def\elab{\end{equation}}
\title[Group theory in pairwise comparisons: caution]{On the use of group theory to generalize elements of pairwise comparisons matrix: a cautionary note}
\author[WWK] {W.W. Koczkodaj}
\address{Computer Science\\ Laurentian University\\ Sudbury, Ontario P3E 2C6, Canada} \email{ wkoczkodaj@cs.laurentian.ca}
\author[FL]{F. Liu}
\address{School of Mathematics and Information Science \\ Guangxi University, Nanning Guangxi 530004, China}
	 \email{f\_liu@gxu.edu.cn} 
\author[VWM]{V.W. Marek}
\address{Department of Computer Science\\
	University of Kentucky\\
	304E1 Davis Marksbury Building\\
	Lexington, KY, USA}\email{marek@cs.uky.edu}
\author[JM]{J. Mazurek}
\address{Faculty of School of Business Administration in Karvina\\
	Silesian University in Opava, Czech Republic}\email{mazurek@opf.slu.cz}
\author[MM]{M. Mazurek}
\address{Faculty of Electrical and Computer Engineering\\
	Rzeszow University of Technology, Poland }\email{mirekmaz@prz.edu.pl}
\author[LM]{L. Mikhailov}
\address{School of Computer Science\\
	The University of Manchester\\
	M13 9PL, United Kingdom\\
	Manchester}\email{ludi.mikhailov@mbs.ac.uk}
\author[CO]{C. \"{O}zel}
\address{Deptartment of Mathematics,
	King Abdulaziz University, \\
	\\ 21589 Jeddah,\\
	Saudi Arabia}\email{cenap.ozel@gmail.com}
\author[WP]{W. Pedrycz}
\address{Deptartment of Electrical \& Comp Engineering,
	University of Alberta, \\
	\\ Edmonton, AB T6R 2G7,\\
	Canada}\email{wpedrycz@ualberta.ca}
\author[AP]{A. Przelaskowski}
\address{Faculty of Mathematics and Computer Science\\
	Warsaw University of Technology\\
	Koszykowa 75, 00-662 Warsaw\\
	Poland}\email{arturp@mini.pw.edu.pl}
\author[AS]{A. Schumann}
\address{Department of Cognitive Science and Mathematical Modelling, University of Information Technology and Management in Rzeszow,  Poland}\email{andrew.schumann@gmail.com}
\author[RS]{R. Smarzewski}
\address{Faculty of Cybernetics, Military University of Technology, Warsaw, Poland}\email{ryszard.smarzewski@wat.edu.pl}
\author[DS]{D. Strzalka}
\address{Faculty of Electrical and Computer Engineering\\
	Rzeszow University of Technology, Poland}\email{strzalka@prz.edu.pl}
\author[JS]{J. Szybowski}
\address{Faculty of Applied Mathematics,	AGH University of Science and Technology\\
	al. Mickiewicza 30\\ 30-059 Krakow}\email{szybowsk@agh.edu.pl}
\author[YY]{Y. Yayli}
\address{Department of mathematics\\
	Ankara University\\
	Emniyet Mahallesi, D\"{u}gol Cd. 6A\\
	06560 Yenimahalle/Ankara}\email{yayli@science.ankara.edu.tr} 
\begin{document}
	
	\begin{abstract}
		
		This paper examines the constricted use of group theory in the studies of pairwise comparisons. The presented approach is based on the application of the famous Levi Theorems of 1942 and 1943 for orderable groups.
		The theoretical foundation for multiplicative (ratio) pairwise comparisons has been provided. Counterexamples have been provided to support the theory.
		
		In our opinion, the scientific community must be made aware of the limitations of using the group theory in pairwise comparisons. Groups, which are not torsion free, cannot be used for ratios by Levi's theorems. 
		
		\noindent Keywords: pairwise comparisons; inconsistency; approximate reasoning; group theory.
	\end{abstract}
	\maketitle
	
	\section{Introduction}
	
	A larger collaboration has been called to raising issues about the incorrect generalization of pairwise comparisons (PC or PCs depending on the context). We also  hope that our study will also pave the way for future constructive critiques. Harmful generalizations look attractive although they are ``not even wrong'' (the phrase attributed to Wolfgang Pauli, Nobel laureate).
	
	The origins of the first documented use of pairwise comparisons have been recently traced to the thirteenth-century mathematician, polymath, philosopher, logician, Franciscan tertiary, and writer from the Kingdom of Majorca Raymond Llull  in \cite{FHHR2009}. The plural form of \textit{comparisons} is needed (as addressed in \cite{K2Y}) and has been traced to the 13th century and attributed to Ramon Llull. However, it is easy to imagine that the use of pairwise comparisons for selecting a better stone for a tool, from two stones kept in two hands, must have taken place among prehistoric humans.
	
	
	There are two types of pairwise comparisons that are commonly considered: additive and multiplicative. The additive type is based on the comparisons ``by how much'' (percentage may be used) one entity is bigger (better, more important, or similar comparisons) than another entity. The multiplicative type is a simple ratio of two entities. Compared entities may be physical objects or abstract concepts (e.g., software safety and software reliability).
	From the mathematical point of view, multiplicative (or ratio) PCs are more popular and more challenging than additive PCs. The additive PCs can be derived from the multiplicative form by a logarithmic mapping. It is not entirely clear who should be credited for such an observation but \cite{RAND} is usually assumed to be the original source. Subjective assessments (evaluations or judgments) as the primary source of data are addressed in many publications. In \cite{HK1996, KS2010, KS2016}, the logarithmic mapping is used to provide the proof of convergence and for the interpretation of the limit of convergence.
	
	Pairwise comparisons are commonly represented by a \textit{PC matrix} since matrices are one of the most universal data structures with a well established theory. In case of multiplicative PCs, it is a matrix of ratios with 1s on the main diagonal and reciprocal ($x$ and $1/x$) values in upper/lower triangles. The ratio of $B/A$ is the reciprocal value of  $A/B$. 
	
	The term \textit{ratio} was frequently used in \cite{CD2012} (even the journal title includes ratio). In fact, the multiplicative PC matrices can be regarded as a part of ratio calculus since PC matrix elements are the ratios. Rarely in pairwise comparisons, ratios are established by the actual division. When entities are abstract concepts, such as software quality and software safety, the division operation is undefined but using the ratio still makes sense (e.g., by using ``three times more important''). For this reason, their ratio is given (for example, by expert assessment). In fact, entities can represent anything we can imagine.
	
	
	Can we also assume that the ratios between entities could be elements of any algebraic group?
	The main goal of PCs is to split 1 into $n$ real values assigned to $n$ entities $E_i$, $i=1, \ldots, n$.
	
	\subsection{Contribution}
	
	
	The body of this paper is organized into four sections followed by the conclusions. Section~\ref{ratio} uses the famous Levi Theorems of 1942 and 1943 (see \cite{Levi1942, Levi1943}) to show that only torsion free abelian groups are orderable (hence disproving the claim in \cite{Wajch2019} that generalization to any algebraic group can be used). In Section~\ref{case}, an example involving a generic final exam is introduced. In Section~\ref{examples}, we present a number of counter-examples that PC matrix entries cannot be generalized to any algebraic group as stipulated in \cite{Wajch2019}.

	\section{Defending the integrity of algebraic ratio}
	\label{ratio}
	
	Matrices of ratios are called pairwise comparisons (PC) matrices.
	The ratio specification is attributed to Euclid in his famous Elements, Book V. 
	One of the oldest ratios is $\pi$, the ratio of a circle's circumference to its diameter. 
	Evidently, the ratio concept predates numbers since it was easier to express 
	that one settlement was twice as big as another rather than to count settlers in each of them. Indeed, two times bigger as a ratio relies on a smaller number than assertion: our settlement has 100 people and our enemies have only 50 people in their settlement. 
	It is also important to notice that the ratio concept 
	has not evolved since the time of Euclid, giving basis for the definition of rational, irrational and real numbers. \\
	
	In this section, we provide arguments against the use of negative ratios for pairwise comparisons. By doing this, we invalidate the unrestricted use of algebraic groups since all positive and negative numbers (excluding zero) create a group under multiplication operation. It should be observed, the question ``why ratio cannot be negative?'' is not trivial to answer. One of the most compelling explanations can be provided by using the concept of \textit{torsion}  in group theory.
	
	The first argument is a simple observation that there is no interpretation of negative ratios. For example, the statement that $x$ is -1 times larger than $y$ has no interpretation.
	The second argument is based on certain group properties investigated by Levi in \cite{Levi1942, Levi1943}) labeled in the followup research as torsion free groups. 
	
	\begin{df}
		A group $\mathcal G$ is a set $ G $ together with a (binary) group operation $ \odot $ such that:
		\begin{itemize}
			\item for all $ A,B\in G $ implies $ A\odot B \in G $,
			\item for all $ A,B,C \in G $ is $ (A\odot B) \odot C = A\odot (B \odot C) $,
			\item there exist an element $ e $ such that $ A \odot e = e \odot A$, $ \forall A\in G $,
			\item for all $ A \in G $, there exists $ A^{(-1)} \in G $ such that
			$ A^{(-1)} \odot A = A \odot A^{(-1)} = e $,
			\item a group $\mathcal G$ is called abelian if for all $ A,B\in G $ it holds that $ A\odot B = B \odot A $.
		\end{itemize}
	\end{df}
	
	The commutative property is the requirement for abelian groups but it is not necessary for a non-abelian group.
	
	In our case, the set $ G $ is composed of elements that are pairwise comparisons (ratios), and the group operation $ \odot $ is usual multiplication.
	
	All elements of $ G $, that is pairwise comparisons, should be comparable. Therefore, for any two elements $ A $ and $ B $ of the set $ G $ it holds that either $ A \geq B $ or $ A \leq B$.  
	
	Formally, this is a property of a linearly ordered group.
	
	\begin{df}
		A linearly ordered abelian group $(\mathcal G,\leq)$ is a group equipped with a relation $ \leq $ such that $\forall A, B, C \in G  $: $ A \leq B $ implies $ A\odot C \leq B \odot C $.
	\end{df}
	
	Such triad is easy to demonstrate that the set of negative pairwise comparisons (ratios) with multiplication does not satisfy the property of a linearly ordered group, since, for instance, let $A= -2$ and $B=-1$. Then $ A < B $, but by multiplication by $ C = (-1) $ we get $ AC > BC $.
	
	The problem is that the set of negative numbers contains elements of finite order. Let us recall that an element $A \in G$ is of finite order if there exists $m \in N $ such that $a^m = 1$. For instance, the $-1$ is of the order 2: $(-1)^2 = 1$. Another example of an element of finite order is an imaginary number $i$ such that $i^2 = -1$. Indeed, $i^m = 1$ for $m = 0, 4, 8, 12, \dots$. So, the $i$ is of the order 4.
	
	A group that contains no element of finite order is called a \emph{torsion free group}. Levi theorems in \cite{Levi1942, Levi1943} provide the relationship between torsion free groups and linearly ordered abelian groups.
	
	\newpage 
	
	\begin{thm}
		An abelian group $\mathcal G$ is linearly ordered if and only if it is torsion free.
	\end{thm}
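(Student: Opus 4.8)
The plan is to read the statement as the assertion that an abelian group $\mathcal{G}$ admits a total order $\leq$ compatible with its operation (so that $A\leq B$ implies $A\odot C\leq B\odot C$ for all $C$) precisely when it has no nontrivial element of finite order, and to prove the two implications separately.

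The forward implication (linearly ordered $\Rightarrow$ torsion free) should be routine. Suppose $(\mathcal{G},\leq)$ is linearly ordered and take $A\neq e$. By totality either $A>e$ or $A<e$, and compatibility together with cancellation upgrades this to the strict statement $A\odot C< B\odot C$ whenever $A<B$ (if $A\odot C=B\odot C$ then cancelling $C$ forces $A=B$). A short induction then gives $A^{k}>e$ for all $k\geq 1$ in the first case and $A^{k}<e$ for all $k\geq 1$ in the second; either way $A^{k}\neq e$ for every $k\geq 1$, so $A$ has infinite order and $\mathcal{G}$ is torsion free.

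The substantial direction is the converse, and the cleanest route is to embed $\mathcal{G}$ into a rational vector space and transport a lexicographic order. First I would form the divisible hull $V=\mathcal{G}\otimes_{\Z}\Q$; torsion-freeness guarantees that the canonical map $A\mapsto A\otimes 1$ is injective, so $\mathcal{G}$ is isomorphic to a subgroup of the additive group of the $\Q$-vector space $V$. Next I would choose a basis $\{b_i\}_{i\in I}$ of $V$ over $\Q$ and fix a linear order on the index set $I$. Since every $v\in V$ is a finite combination $v=\sum_{i} q_i b_i$, I would call $v$ positive when the coefficient $q_{i_0}$ at the largest index $i_0\in\supp(v)$ is positive, and set $u\leq w$ iff $w-u$ is positive or zero. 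One checks that this positive cone is closed under addition and satisfies trichotomy, so it defines a total order on $V$ compatible with addition; restricting it to the subgroup $\mathcal{G}$ then yields the desired compatible total order, proving that $\mathcal{G}$ is linearly ordered.

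I expect the obstacle to lie entirely in this converse. Two points need care. The construction of the basis of $V$ and the linear order on $I$ invokes the axiom of choice (a Hamel basis, or a well-ordering of $I$), so the resulting order is non-canonical; one must also verify that the leading coefficient is well defined --- this is immediate from uniqueness of coordinates and finiteness of the support --- and that positivity is preserved under addition, which follows by comparing leading indices. The final subtlety is that the conclusion concerns $\mathcal{G}$ itself and not merely its rationalization $V$; this is handled by observing that the restriction of a compatible total order from $V$ to the subgroup $\mathcal{G}$ remains both total and compatible.
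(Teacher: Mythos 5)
Your proof is correct, but there is nothing in the paper to compare it with: the paper's ``proof'' of this theorem consists solely of the citation ``see \cite{Levi1942, Levi1943}'', so you have supplied an argument where the authors supplied none. What you give is the standard modern proof of Levi's theorem, and both halves check out. The easy direction works exactly as you describe once the translation-invariance $A\leq B\Rightarrow A\odot C\leq B\odot C$ is upgraded to a strict inequality via cancellation, yielding $A^k>e$ (or $A^k<e$) for all $k\geq 1$ and hence infinite order for every $A\neq e$. For the converse, the embedding of the torsion-free group $\mathcal G$ into $V=\mathcal G\otimes_{\Z}\Q$ is injective precisely because the kernel of $A\mapsto A\otimes 1$ is the torsion subgroup; ordering $V$ by the sign of the coefficient at the largest index of the (finite) support relative to a well-ordered Hamel basis does give a positive cone closed under addition (since $\supp(v+w)\subseteq\supp(v)\cup\supp(w)$, the leading index of a sum cannot exceed the larger of the two leading indices) and satisfying trichotomy, and restricting the resulting compatible total order to the subgroup $\mathcal G$ preserves both totality and compatibility. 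You are right to flag the reliance on the axiom of choice in selecting and well-ordering the basis; this is intrinsic to the theorem in this generality and is present in Levi's original treatment as well. One small reading point: the theorem as stated says ``is linearly ordered'' where it means ``admits a linear order compatible with the group operation''; your interpretation is the intended one, and the forward direction also tacitly excludes the trivial torsion element $e$ itself from the torsion-freeness requirement, as is standard.
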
 
	\begin{proof}
		see \cite{Levi1942, Levi1943}.
	\end{proof}
	
	A multiplicative group of all real numbers without 0 is not torsion free since $(-1)^2=1$. To the same extent, a multiplicative group of all complex numbers without 0 is not torsion free since $i^m = 1$ for $m= 0, 4, 8, 12, \ldots$.
	Evidently, a multiplicative group of all positive real numbers is torsion free. For this reason, algebraic ratios cannot have negative values. Only positive real numbers may be used for multiplicative pairwise comparisons. For these reasons, all algebraic ratios cannot have negative values. 
	
	Finally, the famous Levi Theorem in \cite{Levi1942} explicitly states that an abelian group admits a linear order if it is torsion-free (torsion-free term was coined in 1950s but the idea if it comes from Levi). Negative ratios are popular in economy, physics, medical testing (see \cite{TFE1975}), and other branches of science but these ratios represent different concepts than the mathematical ratio used in the multiplicative variant of pairwise comparisons. 
	
	
	\section{An easy case for demonstrating how pairwise comparisons can be used}
	\label{case}
	Probably the easiest and the most compelling case for using pairwise comparisons in academia is an application to grading final exams.
	For simplicity, let us assume that we have four problems to solve; $ A, B, C, $ and $ D $.
	Evidently, hardly ever all problems are of equal level of difficulty.
	In such case, it is fair to compare $ A $ to $B$, $ A $ to $ C $, $ A $ to $ D $, $ B $ to $ C $, $ B $ to $ D $, and $ C $ to $ D $. We assume the reciprocity of PC matrix $M$: $m_{ji}=1/m_{ij}$ which is reasonable (when comparing $ B $ to $ A $, we expect to get the inverse of $ A $ to $ B $). The exam is hence represented by the following PC matrix $M$:
	
	\begin{equation} 
	M=[m_{ij}]=\begin{bmatrix}
	1 & A/B &  A/C & A/D\\
	B/A & 1 &  B/C & B/D\\
	C/A & C/B &  1 & C/D\\
	D/A & D/B &  D/C & 1
	\end{bmatrix}
	\end{equation}
	
	\noindent As previously stated, $A/B$ reads  ``the ratio between A and B'' and not as a result of the division (in case of exam problems, the division operation makes no sense to use). \\
	
	Ratios of three entities in a cycle create a triad $(A/B, A/C,B/C)$, which is said to be \textit{consistent} providing $A/B*B/C=A/C$. It is illustrated in Fig.~\ref{fig:iicycle}. Circles in Fig.~\ref{fig:iicycle} contain random numbers of dots. [A/B] reflects the assessed ratio of dots. A large enough number of dots (reflecting the \textit{numerocity}) results in the property that it is impossible to count them in a short period of time. Therefore, we need to rely on the expert opinion, hence the use of pairwise comparisons is useful. The lack of acceptable axiomatization for inconsistency (proposed in \cite{KU2018}) is one of challenges for pairwise comparisons. The convergence of inconsistency algorithms (examined in \cite{HK1996}) is another problem.
	
	
	Symbolically, in a PC matrix $M$, each triad (or a cycle) is defined by $(m_{ik},m_{ij},m_{kj})$. It is consistent if and only if $(m_{ik}*m_{kj}=m_{ij})$. When all triads are consistent (known as the \textit{consistency condition} or \textit{transitivity condition}), the entire PC matrix is considered \textit{consistent}.
	
	\begin{figure}[h]
		\centering
		\includegraphics[width=0.8\linewidth]{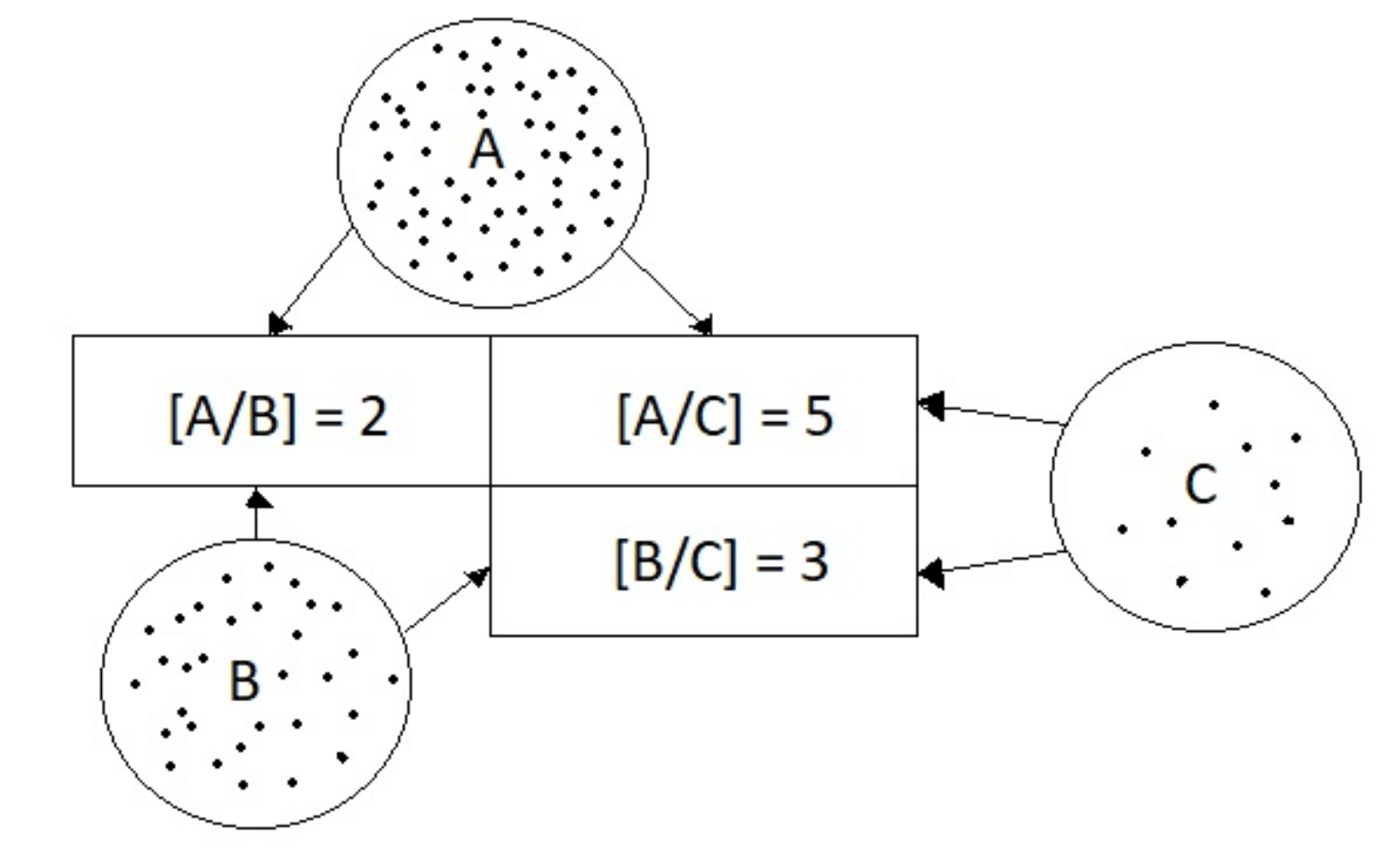}
		\caption[Fig0]{An inconsistency indicator cycle}
		\label{fig:iicycle}
	\end{figure}
	
	Looking at the above exam grading case, we have discovered a pairwise comparisons method which can be used
	to construct a PC matrix. The \textit{solution} to the PC matrix
	is a vector of weights which are geometric means of rows.
	The justification for the use of the vector $w=[v_i]$ of geometric means (GM) of rows
	is that they reconstruct the PC matrix (say $M$): $M=[v_i/v_j]$.
	
	In our example, the weights computed in this fashion (say: $ [30, 20, 10, 40]$), are now being used. By looking at the example results, we can conclude that problem $ D $ is the most difficult with the weight 40. The easiest problem is $ C $ giving one of the pairwise comparisons $ D/C = 4 $.
	
	One of the challenges of pairwise comparisons is inconsistency of assessments. It is well demonstrated by Fig.~\ref{fig:iicycle}.
	It seems that a trivial mistake took place: 6 should be in place of 5 since 2*3 gives this value. However, it unreasonably assumes that 2 and 3 are accurate assessments. We simply do not know which of the three assessments are, or are not, accurate.

	\section{Counter-examples for PC matrix element based on a non-abelian group}
	\label{examples}
	
	In this section, we consider only consistent PC matrices admitting
	elements having the negative or complex number values. According to \cite{Wajch2019}, such values are permitted since all non-zero real numbers form a group under multiplication. Similarly, all non-zero complex numbers form a multiplicative group.
	
	Our counter-examples demonstrate a problem in \cite{Wajch2019} (page 2, line 7):
	\begin{quote}
		\textit{To investigate consistency in PC, it appeared useful to consider a general case when an $n \times n$ matrix $C=[c_{i,j}]$ has all entries $c_{i,j}$ in a group $(G, \odot)$. Then $C$ is called $ \odot$-consistent if $c_{i,k}\odot c_{k,j}=c_{i,j}$ for all $i, j, k \in \{1, 2, \ldots, n\}$(see ...}
	\end{quote}
	
	\noindent The above assumption is of fundamental importance since the remaining part of \cite{Wajch2019} is based on it. 
	
	The theory in Section~\ref{ratio} and our counter-examples, provided in this section, indicate that PC matrix cannot be generalized to an arbitrary algebraic group without running into non-trivial mathematical problems such as negative ratios or complex number ratios. Evidently, there cannot be semantics for such ratios.
	
	The set $ \mathbb{R^+} $ forms a multiplicative group. For this reason, the following example of PC matrix is allowed under the fundamental assumption made in \cite{Wajch2019}.
	
	\begin{ex} \label{e1}
		Let us assume that the following consistent PC matrix $M_1$ has been created for three entities $A$, $B$, and $C$:
		\begin{equation*}
		M_1=\begin{bmatrix}
		\phantom{-}1 &-1 & \phantom{-}1\\
		-1 & \phantom{-}1& -1\\
		\phantom{-}1 &-1&  \phantom{-}1
		\end{bmatrix}
		\end{equation*}
		Such matrix is of fundamental importance for investigating ratios over arbitrary multiplicative groups since it extends the ratio to negative real numbers. Fig.~\ref{fig:fig1} attempts to ``illustrate'' PC matrix $M_1$ but it is impossible to do so without knowing semantics. 
		
		\begin{figure}[h]
			\centering
			\includegraphics[width=0.8\linewidth]{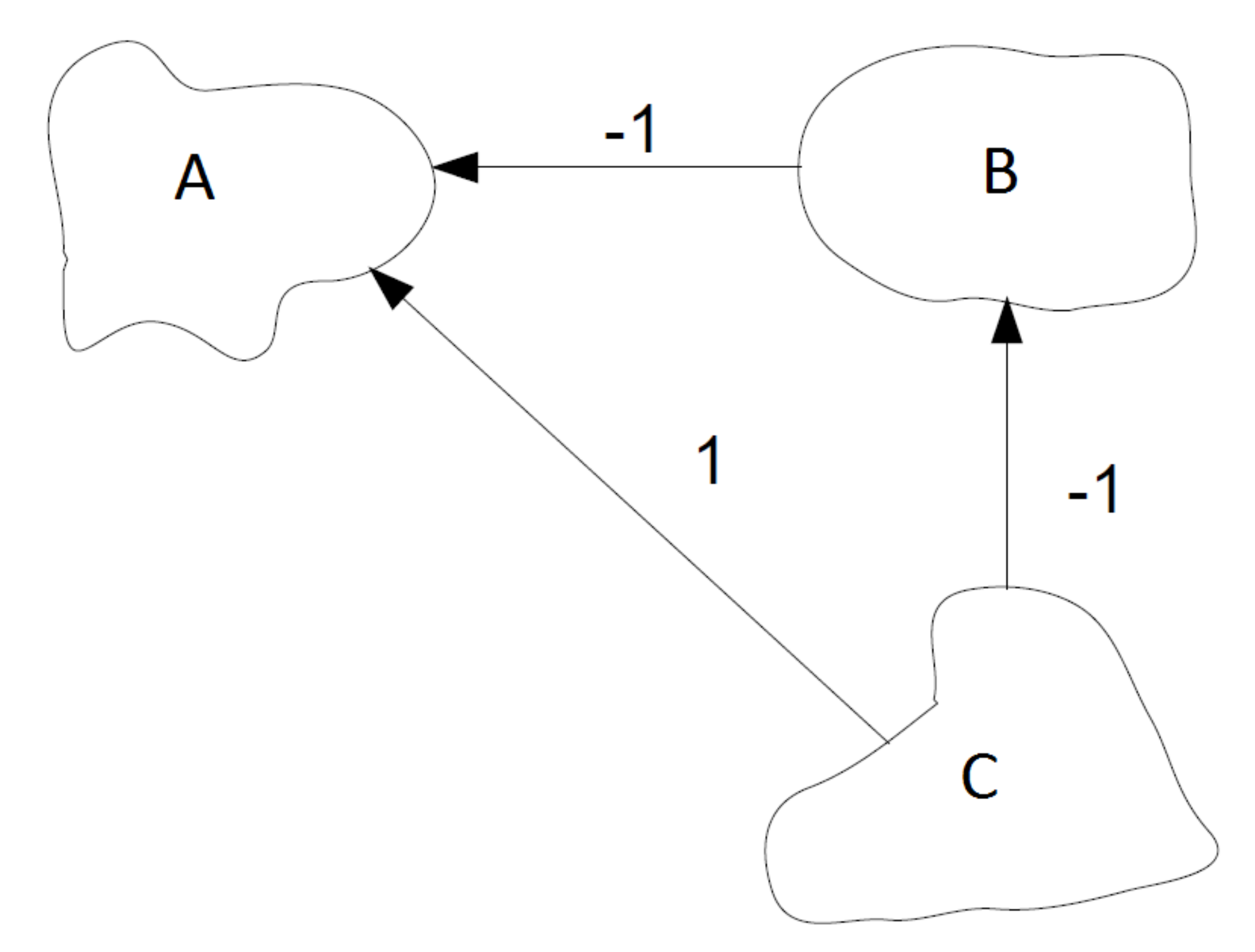}
			\caption[Three objects]{Pairwise comparisons of three entities (PC matrix $M_1$)}
			\label{fig:fig1}
		\end{figure}
		
		In PC matrix $M_1$, there are two triads of the form: $$T=(-1, 1,-1)$$
		One fails to come up with any example of three entities (physical objects or concepts) where three pairwise comparisons of these entities would not only give the above values $(-1,1,-1)$ but would be regarded as consistent pairwise comparisons.
		
		For the sake of discussion, let us consider $A/B=2$. Such expression appears to mean superiority of $A$ over $B$. $A/B=1$ stands for the equality hence $A/B=-1$ can only signify the inferiority of $A$ over $B$.
		Similar reasoning is applicable to $B/C=-1$ giving inferiority of $B$ when compared to $C$. In symbols, we get:
		$$ A \prec  B \prec C$$
		
		The equality is excluded since it is denoted by 1. Following our line of reasoning
		observe that $G=\{-1,1\}$ forms a group under multiplication.
		The geometric means (introduced and studied in \cite{RAND} as a solution) of $M_1$ rows create a vector $[-1, 1, -1]$ and one may only wonder about the meaning of such a vector. 
	\end{ex}

	%

	\begin{ex}
		The issue of complex, but non real ratios, needs to be considered.
		Consider the following consistent PC matrix:
		\begin{equation*}
		M_2=\begin{bmatrix}
		\phantom{-}1 & \phantom{-}i & \phantom{-}1\\
		-i           & \phantom{-}1 & -i\\
		\phantom{-}1 & \phantom{-}i &  \phantom{-}1
		\end{bmatrix}
		\end{equation*}
		
		$M_2$ is a consistent PC matrix since $-i*i=1$. There are 27 vectors of geometric means since the first and third coordinates are combinations of three cubic roots of imaginary unit
		$i$: $$\sqrt[3]{i}= \{\frac{\sqrt{3}}{2}+\frac{i}{2},-\frac{\sqrt{3}}{2}+\frac{i}{2},-i\}$$
		while the second one is an element of $\sqrt[3]{-1}=\{\frac{1}{2}+\frac{\sqrt{3}}{2}i,-1,\frac{1}{2}-\frac{\sqrt{3}}{2}i \}$. Similarly to $M_1$ in our Example~\ref{e1}, there is no established semantic for PC matrix $M_2$ let alone for its complex number solution.
	\end{ex}

	\begin{ex}
		Let
		\begin{equation*} 
		M_3=\begin{bmatrix}
		\phantom{-}1 & -1 &  -1 & -1\\
		-1 & \phantom{-}1 &  \phantom{-}1 & \phantom{-}1\\
		-1 & \phantom{-}1 &  \phantom{-}1 & \phantom{-}1\\
		-1 & \phantom{-}1 &  \phantom{-}1 & \phantom{-}1
		\end{bmatrix}
		\end{equation*}
		be the matrix with entries from $\C^*=\C\setminus\{0\}$ with complex multiplication.
		Evidently, $M_3$ is a consistent PC matrix (all elements of $M_3$ satisfy the condition: $m_{ij}*m_{jk}=m_{ik}$)  and is a reciprocal PC matrix (all elements of $M_3$ satisfy the condition: $m_{ij}=1\slash m_{ji}$) matrix.
		\par
		However, $4$-th root of $-1$ has four solutions in complex numbers $\C:$
		
		
		\begin{multline}
		\left\{v_k:=e^{\frac{\pi+2k\pi}{4}i}:k=0,1,2,3\right\}= \\
		\left\{
		\frac{\sqrt{2}}{2}+i\frac{\sqrt{2}}{2},
		-\frac{\sqrt{2}}{2}+i\frac{\sqrt{2}}{2},
		-\frac{\sqrt{2}}{2}-i\frac{\sqrt{2}}{2},
		\frac{\sqrt{2}}{2}-i\frac{\sqrt{2}}{2}
		\right\}
		\end{multline}
		
		hence one of the PC matrix solutions (the geometric means of rows) can be:
		$$w_k=\left(\prod_{j=1}^{4} m_{kj}\right)^{\frac{1}{4}}=v_k,\;k=0,1,2,3.$$
		Since all $v_k$ are different, we get a different weight $w_k$ for $k$th row despite all rows having identical products of all row elements equal to $-1.$ Hence we get a contradiction.
		\par
		An assumption that the ratio has a negative value $-1$ leads to the contradiction for PC matrices. For example, PC matrix $M_3$ has identical products of rows and the $4$-th root of the product (which is the geometric mean of the row) may have different values (selected from the set of four complex number solutions of $\sqrt[4]{-1}$).
		
		All coordinates of the 4*4*4*4 possible priority vectors (or weights for entities) are non-real, hence they are incomparable. There no "natural" linear order for complex numbers with or without 0. Thus, it is impossible to produce any sensible order of alternatives.
		
		Eigenvalues of $M_3$ are: $[4, 0, 0, 0]$. The corresponding eigenvectors (in columns) are:
		\begin{equation*}
		EV_{M_3}=\begin{bmatrix}
		-1 & \phantom{-}1 &  \phantom{-}1 & \phantom{-}1\\
		\phantom{-}1 & \phantom{-}0 &  \phantom{-}0 & \phantom{-}1\\
		\phantom{-}1 & \phantom{-}0 &  \phantom{-}1 & \phantom{-}0\\
		\phantom{-}1 & \phantom{-}1 &  \phantom{-}0 & \phantom{-}0
		\end{bmatrix}
		\end{equation*}
		
		The eigenvector method (introduced in \cite{Saaty1977}) assumes the Perron vector (the vector corresponding to the maximum eigenvalue) as the solution and the closest possible interpretation of the negative weight for our ``exam grading'' case would be the penalty for the perfect solution. The multiplication by $-1$ would cause it.
		
	\end{ex}

	Let us imagine that we compare two complex numbers $z_1$ and $z_2$. $\mathbb{C}$ cannot be expanded to an ordered field. It follows that neither $z_1 > z_2$ nor $z_1 < z_2$ holds hence it precludes complex numbers from being use as ratios.  
	
	%
	%
	%
	%
	%
	%
	%
	%
	%
	
	\begin{ex}
		Consider the multiplicative and consistent matrix $ M_{1} $. By logarithmic transformation this matrix is transformed into the additive matrix:
		$$M_{4}=
		\begin{bmatrix}
		0 & i\pi & 0 \\
		i\pi & 0 & i\pi \\
		0 & i\pi & 0
		\end{bmatrix}
		$$
		\\
		$$  A_{4} =
		\begin{bmatrix}
		0 & i\pi/2 & 0 \\
		-i\pi/2 & 0 & -i\pi/2 \\
		0 & i\pi/2 & 0
		\end{bmatrix}
		$$
		\\
		Evidently, consistent and reciprocal multiplicative matrix $ M_{4} $ was transformed into inconsistent and non-reciprocal additive matrix  $ A_{4} $. 
		Consistent and reciprocal multiplicative matrix $ M_{2} $ was transformed into inconsistent and reciprocal additive matrix  $ A_{2} $. 
		It follows that logarithmic mapping (in $\bf{C}$) does not preserve either consistency, nor reciprocity.
	\end{ex}
	
	Although compared entities may be of different kinds (for example, a circle and a square), comparing their area makes perfect sense.
	This interpretation has not been altered since antiquity
	and it should remain in this form until we find a good reason to change it.
	
	The negative ratio is not a major problem. The main problem is the contradiction to which the use of negative numbers brings us. It is illustrated by Example~\ref{e1}. The semantics for ratios as a progression: $$2 > 1 > 1/2 > -1$$
	seems reasonable but our discussion demonstrates that leads to the contradiction in our Example~\ref{e1}. In other words, our Example~\ref{e1} implies that the negative ratio cannot be used and it is illustrated by Fig.~\ref{fig:fig1}.
	
	
	The inconsistency in pairwise comparisons may occur when there are superfluous pairwise comparisons. Such phenomenon often occurs when dealing with subjective data. 
	For the usually assumed reciprocity, there are  $\frac{n(n-1)}{2}$ pairwise comparisons but the minimal number of pairwise comparisons for $n$ entities, without generating inconsistency, is $n-1$ (as observed in \cite{KS2015}).
	More than $n-1$ pairwise comparisons for $n$ entities may result in inconsistency.
	An example in \cite{K1993} compares three entities: $A$, $B$, and $C$.
	Two ratios: $x=A/B=2$ and $z=B/C=3$ imply $y=A/C=6$
	but if we additionally assess $y=A/C$ to be 5, it makes all three pairwise comparisons inconsistent as illustrated by Fig~\ref{fig:fig1}. The inconsistency indicator defined in \cite{K1993} has been referred to (e.g., in \cite{Wajch2019}) as Koczkodaj's inconsistency indicator (abbreviated to $Kii$).
	
	The natural question is "why allow superfluous comparisons?"
	The assessment of $y$ to be 5 may be more accurate than the other two assessments since it is impossible to say which assessment is (or is not) accurate.
	In such case, we cannot reduce the number of comparisons, and thus, there is a need to process all assessments.
	
	The use of arbitrary abelian groups, introduced in \cite{CavDap09} 
	also allow negative real numbers but proponents carefully assumed only strictly positive real values for elements of a group and used abelian linearly ordered (\textit{alo}) groups in \cite{CB2018}. 

	\section{Conclusions}
	
	This study provides the mathematical theory and counter-examples that the multiplicative pairwise comparisons (ratios) cannot be elements of an arbitrary group. Algebraic ratios should remain positive real numbers unless a sound mathematical theory or examples are provided for other groups.
	
	There is a high expectation of utilizing the proposed computer science theory. Without at least the utilization perspectives, it is a "what if" theory and our study shows what would take place if negative or complex numbers were used to construct the multiplicative pairwise comparison matrices. 
	
	In the philosophy of mathematics, Benjamin Peirce is known for the statement "Mathematics is the science that draws necessary conclusions."
	Based on Levi's Theorems, using the negative ratio in pairwise comparisons draw necessary conclusions of ``they cannot be use'' and it is of considerable importance for all future generalization attempt of pairwise comparisons. 
	
	In his autobiography (see \cite{Ford}), Henry Ford wrote:
	\begin{quote}
		and I remarked: "Any customer can have a car painted any colour that he wants so long as it is black."
	\end{quote}
	
	For the time being, using group elements for constructing multiplicative pairwise comparisons matrices can take place as long as the group is over  $\mathbb{R^+}$.
	
	\section*{Acknowledgment}
	
	Developing this text has been a true collective effort. In particular,  familiarity with the practice of the
	applications of pairwise comparisons and mathematics were needed to conduct this study. The authors recognize the efforts of Martha O'Kennon  and Tiffany Armstrong in proofreading this text.

\end{document}